\documentclass[12pt]{article}
\usepackage{amssymb}

\usepackage{amsmath}
\usepackage{tikz}
\usepackage{amscd}
\usepackage{latexsym}
\usepackage{mathrsfs}
\usepackage{enumitem}
\usepackage{lipsum}
\usepackage[noadjust]{cite}
\usepackage{lineno} 
\usepackage{mathtools}
\usepackage[hidelinks]{hyperref}

\usepackage{indentfirst}
\newtheorem{lemma}{Lemma}[section]
\newtheorem{theorem}[lemma]{Theorem}

\newcommand{\qed}{\hfill $\Box$}
\newenvironment{proof}{ \emph{proof.}}{\qed}

\def\bc{\begin{center}}
\def\ec{\end{center}}

\numberwithin{equation}{section}

\begin{document}
\title{On the generation of the symmetric group by index relations of three prefix reversals}
\author{Yexiao Lei, Jinxing Zhao\thanks{Corresponding author, E-mail address:zhjxing@imu.edu.cn.},  Ziyu Zhu, Yang Wu
\\{ \small \it School of Mathematics Sciences, Inner Mongolia University, Hohhot, P. R. China.}}
\date{}
\maketitle
\vskip -8pt
\baselineskip 16pt
\newcommand\blfootnote[1]{%
\begingroup
\renewcommand\thefootnote{}\footnote{#1}%
\addtocounter{footnote}{-1}%
\endgroup}
\begin{center}
\begin{minipage}{140mm}

\noindent{\bf Abstract:} The cubic pancake graphs are Cayley graphs over the symmetric group \(S_{n}\) generated by three prefix reversals. There is the folllowing open problem: characterize all the sets of three prefix reversals that generate \(S_{n}\). Blanco and Golubyatnikov give a complete solution of the problem when any of the two smallest or the two largest lengths but n are included in a triple of prefix reversals. According to their conjectures, we give some conditions implying a triple of prefix reversals does not generate \(S_{n}\). Moreover, Using the symmetric groups that can already be generated by three prefix reversals, we can derive some recurrence conditions.

\vskip 6pt \noindent
\textbf{Keywords:} symmetric group; cubic pancake graph; prefix reversal; Cayley graphs.
\vskip 6pt \noindent
\textbf{2020 Mathematics Subject Classifications:} 20B30; 20B05; 
\end{minipage}
\end{center}

\section{Introduction}
A classical and celebrated result \cite{Dixon1968} states that two random permutations of a set of \(n\) elements almost surly generate either the symmetric or the alternating group of degree \(n\), and similar results hold when more generators are considered. In particular, the symmetric group can always be generated by the transposition \(\left(1\ 2\right)\) together with the \(n\)-cycle \(\left(1\ 2\ \cdots \ n\right)\). Two well-known generating sets of \(n-1\) transpositions are the Coxeter generators \(\left\{(i\ (i+1)\ |\ 1\le i<n\right\}\) and the star generators \(\left\{(1\ i)\ |\ 1< i\le n\right\}\) .

Prefix reversals are a classical family of involutions arising in permutation sorting and the study of Cayley graphs. Let \(S_n\) denote the symmetric group on \(\{1,\ldots,n\}\). For each \(2\le i\le n\), define the prefix reversal \(r_i\in S_n\) by
\[
r_i(j)=
\begin{cases}
i+1-j, & 1\le j\le i,\\
j, & i<j\le n.
\end{cases}
\]
Equivalently,
\[
r_i=\prod_{j=1}^{\lfloor i/2\rfloor}(j\;\;i+1-j).
\]
Thus, \(r_i\) reverses the first \(i\) positions and fixes all remaining positions. The problem of sorting a permutation using as few prefix reversals as possible has attracted considerable attention. Gates and Papadimitriou \cite{William1979} established bounds on the maximum number of reversals required, and Chitturi et al. \cite{Chitturi2009} subsequently improved the upper bound. Bulteau, Fertin, and Rusu \cite{Laurent2015} proved that finding an optimal sorting sequence is NP-hard. These results concern the length of a sorting sequence when all prefix reversal lengths are available.

Restricting the allowed reversal lengths leads to a related generation problem: which subsets of prefix reversals suffice to generate every permutation? This question has a natural interpretation in terms of Cayley graphs. The pancake graph is the Cayley graph
\[
P_n=\operatorname{Cay}(S_n,\{r_2,\ldots,r_n\}),
\]
and retaining only the edges associated with a subset of reversals gives a spanning subgraph that is connected precisely when those reversals generate \(S_n\). Bass and Sudborough \cite{Douglas2003} studied restricted prefix reversals and the corresponding fixed-degree Cayley networks, providing an important connection between permutation generation and network structure.

The case of three distinct prefix reversals is of particular interest, since a generating triple determines a connected cubic spanning subgraph of the pancake graph. Related questions also arise in permutation enumeration. Sawada and Williams \cite{Sawada2016} studied successor rules for generating permutations by prefix reversals and considered the restriction to the three lengths \(n-2\), \(n-1\), and \(n\). Konstantinova and Son \cite{Konstantinova2022} investigated the girths of several families of cubic pancake graphs. These studies motivate a systematic understanding of which triples of prefix reversals generate the full symmetric group.

Since every prefix reversal of length less than \(n\) fixes the position \(n\), any generating triple must contain \(r_n\). The problem therefore reduces to determining when
\[
G_{n}:=\langle r_n,r_m,r_k\rangle=S_n,
\qquad 2\le k<m<n.
\]
Blanco and Golubyatnikov \cite{Blanco2025} obtained complete classifications for triples containing at least one of \(r_2\), \(r_3\), \(r_{n-1}\), and \(r_{n-2}\), together with further necessary conditions for generation.

Their computational investigations also suggested relations between generating triples in different degrees. In particular, they conjectured that generation is preserved when all three reversal lengths are increased by four, and that a corresponding shift by two preserves generation when \(n\equiv0,1\pmod4\). They further proposed a non-generation condition describing a triangular region in the parameter space. These conjectures concern structural features extending beyond the individual families covered by the existing classification results.

The present paper establishes the two shift properties and the parity-dependent non-generation condition. Thus, our results explain how generating triples give rise to further generating triples in larger degrees and provide restrictions on the parameters for which generation is possible. The precise statements are given in the following theorems.

\section{Preliminaries}
Let \(\Omega=\{1,2,\ldots,n\}\), and denote by  \(\operatorname{Alt}_n\) the  alternating groups on \(\Omega\). We use cycle notation for permutations. In cycle notation, \((a_1\,a_2\,\cdots\,a_\ell)\) maps \(a_i\) to \(a_{i+1}\) for \(1\leq i<\ell\), maps \(a_\ell\) to \(a_1\), and fixes all remaining elements of \(\Omega\). Products of permutations are evaluated from right to left; thus
\[
(\sigma\tau)(i)=\sigma(\tau(i))
\qquad
(\sigma,\tau\in\operatorname{Sym}_n,\ i\in\Omega).
\]
Throughout the proofs, we use suitable products of prefix reversals to construct permutations with prescribed cycle structures.

We next recall some terminology concerning permutation groups. Let \(G\) be a finite group acting on \(\Omega\). The orbit of \(x\in\Omega\) under \(G\) is
\[
Gx=\{g(x):g\in G\}.
\]
A subset of \(\Omega\) is \(G\)-invariant if and only if it is a union of \(G\)-orbits. The action is transitive if, for every \(x,y\in\Omega\), there exists \(g\in G\) such that \(g(x)=y\).

A nonempty subset \(B\subseteq\Omega\) is a block for the action of \(G\) if, for every \(g\in G\), either
\[
g(B)=B
\qquad\text{or}\qquad
g(B)\cap B=\varnothing,
\]
where \(g(B)=\{g(b):b\in B\}\). If the action is transitive, the distinct translates of \(B\) form a \(G\)-invariant partition of \(\Omega\), called a block system. The singleton subsets and \(\Omega\) itself are the trivial blocks. A transitive action is primitive if it admits no nontrivial blocks, and imprimitive otherwise. We write \(H<G\) to indicate that \(H\) is a proper subgroup of \(G\).

For ease of reference in the subsequent proofs, we introduce the following notation:
\[
a=n-m,\quad b=n-k,\quad N\in\left\{n+2,n+4\right\},\quad s=a+c, \quad t=N-s.
\]

\begin{lemma}
[{\cite[2.2]{Douglas2003}}]
\label{Douglas}
    Necessary conditions for a set of flips to generate \(S_{n}\).
    
    In order for a set of flips to be able to generate all permutations, all of the following conditions must be satisfied: (these are not sufficient conditions) 
    \begin{enumerate}[label=(\arabic*)]
        \item \label{Douglas.1}
        One of the flips must be of size \(n\).
        \item \label{Douglas.2}
        One of the flips must be of size \(i\), where \(\left \lfloor\frac{n}{2}\right \rfloor<i<n\).Otherwise, there is no way to move an element from the first half to the second half.
        \item \label{Douglas.3}
        One of the flips must be of size \(l\), where \(l\) is even. Otherwise, no element can be moved from an odd-numbered position to an even-numbered position.
        \item \label{Douglas.4}
        One of the flips must be an odd permutation. Otherwise, one can only get the set of all even permutations. 
        \item \label{Douglas.5}
        The set of flips must include three flips with relatively prime sizes. Otherwise, if \(l>1\) is the greatest common divisor of the flips sizes, then some of the blocks of \(l\) contiguous elements can never be separated from each other.
    \end{enumerate}
\end{lemma}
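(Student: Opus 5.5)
The plan is to prove each of the five conditions separately, all by the same device. For each condition, assume it fails and exhibit a structure on \(\Omega\) that every flip in the set preserves. Then the group the flips generate also preserves it. The structure will be a fixed point, a \(G\)-invariant subset, a nontrivial block system, or the sign, and in each case it is not preserved by all of \(S_n\). Throughout I assume \(n\ge 3\). For \(n=2\), condition (2) fails literally, since \(r_2\) alone generates \(S_2\) and there is no \(i\) with \(1<i<2\). The basic computation used everywhere is the formula \(r_i(j)=i+1-j\) for \(j\le i\) and \(r_i(j)=j\) for \(j>i\).

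\textbf{Conditions (1) and (4).} For (1), if every flip has size \(<n\), then every generator fixes the point \(n\). Hence the whole generated group fixes \(n\), so it is not transitive and is not \(S_n\). For (4), if every flip is an even permutation, the generated group lies in \(\operatorname{Alt}_n\), which is proper for \(n\ge2\). It is worth recording that \(r_i\) has sign \((-1)^{\lfloor i/2\rfloor}\), since it is a product of \(\lfloor i/2\rfloor\) transpositions.

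\textbf{Conditions (2) and (3).} For (2), suppose every flip other than \(r_n\) has size \(i\le h:=\lfloor n/2\rfloor\). Put \(A=\{1,\dots,h\}\) and \(B=\{n+1-h,\dots,n\}\).
\begin{itemize}
\item Each such \(r_i\) maps \(A\) onto \(A\) and fixes \(B\) pointwise.
\item \(r_n\) interchanges \(A\) and \(B\).
\end{itemize}
If \(n\) is odd, the middle point \((n+1)/2\) is fixed by every generator, so the group is intransitive. If \(n\) is even, \(\{A,B\}\) is a block system with blocks of size \(n/2\), which is nontrivial for \(n\ge 4\); the only even case left, \(n=2\), is excluded by \(n\ge3\). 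So the group is imprimitive and hence proper, because \(S_n\) is primitive.

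For (3), if every flip size \(l\) is odd, then \(j\) and \(l+1-j\) have the same parity. So each generator preserves the set of odd positions, which is a proper nonempty invariant subset, and the group is intransitive.

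\textbf{Condition (5).} I read ``three flips with relatively prime sizes'' as saying that the gcd of all the flip sizes is \(1\); in the three-flip setting of this paper, this is exactly the gcd of the three sizes. Suppose \(l>1\) divides every flip size, and hence \(n\), since \(r_n\) must be present by (1). Partition \(\Omega\) into the consecutive blocks \(C_q=\{ql+1,\dots,(q+1)l\}\) for \(0\le q<n/l\).

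A flip \(r_i\) with \(i=pl\) sends \(ql+s\), where \(1\le s\le l\), to \((p-1-q)l+(l+1-s)\) when \(q<p\), and fixes it otherwise. So \(r_i\) maps each block \(C_q\) onto a block \(C_{q'}\). Hence the generated group preserves this partition. If \(1<l<n\), the partition is a nontrivial block system and the group is imprimitive. If \(l=n\), the only flip is \(r_n\), the group has order \(2<n!\) because \(n\ge3\), and it is again proper.

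The only step needing real care is this last block computation: one must check that a block lying inside the prefix lands exactly on another block and not across a block boundary. This is precisely where the divisibility \(l\mid i\) is used. The other point to handle is the small-\(n\) degeneracy in (2), which is why \(n\ge3\) is assumed.
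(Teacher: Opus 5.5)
Your proof is correct and follows essentially the same approach as the paper, which gives no proof of its own beyond citing Bass--Sudborough and the one-line justification attached to each item; you make each of those rigorous (the fixed point $n$, the two halves swapped by $r_n$ with the middle point fixed when $n$ is odd, the parity of positions, the sign, and the blocks of $l$ contiguous positions). Your $n\ge 3$ caveat is genuinely needed, since items (2) and (5) fail for $n=2$. Your reading of (5) as $\gcd$ of all the sizes being $1$ matches how the paper's remark and its later appeal to $\gcd(n,m,k)=1$ use it.
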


\noindent\textbf{Remark.} Originally, in \cite{Douglas2003} the authors formulate Observation (5) as follows: The generating set must contain at least two prefix reversals with relatively prime indices.  Of course, if this holds, let the set of flips as \(\left\{r_{i},r_{j},r_{k}\right\}\) then \(\gcd(i,j,k)=1\). The converse does not hold. Indeed, if \(i=15,\ j=12\) and \(k=10\) then \(\gcd(i,j)=3\), \(\gcd(j,k)=2\), and \(\gcd(i,k)=5\), however \(\gcd(i,j,k)=1\). Moreover, it is easy to check that \(\left \langle r_{15},r_{12},r_{10} \right \rangle =S_{15}\).

\begin{lemma}[{\cite[Theorem~1.2]{jones2014}}]\label{jones1.2}
   Let \(G\) be a primitive permutation group of finite degree \(n\), not containing the alternating group \(A_{n}\). Suppose that \(G\) contains a cycle fixing \(k\) points, where \(0\le k\le n-2\). Then one of the following holds:
   \begin{enumerate}[label=(\arabic*)]
        \item \label{jones1}
        \(k=0\) and either
        \begin{enumerate}
            \item  \(C_{p}\le G\le AGL_{1}(p)\) with \(n=p\) prime, or
            \item \(PGL_{d}(q)\le G\le P\Gamma L_{d}(q)\) with \(n=(q^d-1)(q-1) \) and \(d\ge 2\) for some prime power \(q\), or
            \item  \(G=L_{2}(11)\), \(M_{12}\) or \( M_{23}\) with \(n=11, 11 \ \text{or} \ 23 \) respectively.
        \end{enumerate}
        \item \label{jones2}
        \(k=1\) and either
        \begin{enumerate}
            \item \(AGL_{d}(q)\le G \le A\Gamma L_{q}\) wwith \(n=q^d\) and \(d\ge 1\) for some prime power q, or
            \item \(G=L_{2}(p)\) or \(PGL_{2}(p)\) with \(n=p+1\) for some prime \(p\ge 5\), or
            \item \(G=M_{11},\ M_{12}\) or \(M_{24}\) with \(n=12,12\ \text{or} \ 24 \) respectively.
        \end{enumerate}
        \item \label{jones3}
        \(k=2\) and \(PGL_{2}(q)\le G\le P\Gamma L_{2}(q)\) with \(n=q+1\) for some prime power q.
    \end{enumerate}
\end{lemma}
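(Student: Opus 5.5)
Lemma~\ref{jones1.2} is quoted from \cite{jones2014}. The plan is to reduce everything to the known lists of multiply transitive groups and then inspect cycle types. Let \(g\in G\) be a cycle of length \(n-k\), let \(F\) be its set of \(k\) fixed points, and put \(H=\langle g\rangle\).

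\textbf{Step 1: the subgroup \(H\).} \(H\) fixes \(F\) pointwise and is regular, hence transitive, on \(\Omega\setminus F\). I would then invoke the classical lemma of Jordan--Marggraf type: if \(G\) is primitive and some subgroup fixes \(k\) points and is transitive on the remaining \(n-k\ge 2\) points, then \(G\) is \((k+1)\)-transitive. For \(k=0\) this is only transitivity, and a separate argument is needed.

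\textbf{Step 2: the case \(k=0\).} Here \(H\) is a cyclic regular subgroup. By the Burnside--Schur theorem, a primitive group with a regular cyclic subgroup either has prime degree \(p\), with \(G\le AGL_1(p)\) when solvable, or is \(2\)-transitive. In the \(2\)-transitive case I would run through the classification of finite \(2\)-transitive groups (a CFSG consequence). For each family I would ask which members contain a regular cyclic subgroup.
\begin{itemize}
\item In projective groups, Singer cycles give \(PGL_d(q)\le G\le P\Gamma L_d(q)\) on \((q^d-1)/(q-1)\) points.
\item The sporadic cases are \(L_2(11)\) on \(11\) points, \(M_{11}\), and \(M_{23}\). (The statement's \(M_{12}\) at \(n=11\) should read \(M_{11}\), which contains an \(11\)-cycle.)
\item All other families are ruled out by element-order or cycle-type considerations.
\end{itemize}

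\textbf{Step 3: the case \(k=1\).} By Step 1, \(G\) is \(2\)-transitive and the point stabilizer \(G_x\) contains an \((n-1)\)-cycle, so \(G_x\) is transitive on \(\Omega\setminus\{x\}\). Going through the \(2\)-transitive list, one finds the following.
\begin{itemize}
\item In affine groups, a Singer cycle of \(GL_d(q)\) is regular on the nonzero vectors, giving \(AGL_d(q)\le G\le A\Gamma L_d(q)\).
\item \(L_2(p)\) and \(PGL_2(p)\) on \(p+1\) points qualify, with \(p\) prime, via elements of order \(p\).
\item The remaining cases are \(M_{11}\) and \(M_{12}\) on \(12\) points and \(M_{24}\).
\end{itemize}

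\textbf{Step 4: the case \(k\ge 2\).} By Step 1, \(G\) is \(3\)-transitive. For \(k\ge 3\), \(G\) is \(4\)-transitive, so it is \(M_{11},M_{12},M_{23}\) or \(M_{24}\) in their natural actions. One checks from their cycle types that none contains a cycle fixing exactly \(k\ge 3\) points, so this case does not occur. For \(k=2\), the \(3\)-transitive groups not containing \(A_n\) are the affine groups \(AGL_d(2)\) and \(2^4.A_7\), groups between \(PGL_2(q)\) and \(P\Gamma L_2(q)\), and a few sporadic cases. An element fixing exactly two points and cycling the rest must be a split-torus element of order \(q-1\). Such elements exist exactly in the groups with \(PGL_2(q)\le G\le P\Gamma L_2(q)\). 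The affine and sporadic cases are excluded by checking cycle types.

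\textbf{Main obstacle.} The hardest part is the systematic sweep in Steps 2--4 through the classification of \(2\)-transitive groups, which depends on CFSG. Each family, especially the almost simple ones such as unitary, Suzuki, Ree and symplectic groups, must be shown to contain no element with the required single long cycle. I would handle these uniformly by bounding element orders in each group against \(n-k\), the degree minus the number of fixed points. The Jordan--Marggraf reduction in Step 1 and the final verification of the Mathieu groups are routine by comparison.
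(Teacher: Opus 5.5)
The paper gives no proof of this lemma; it is quoted from Jones, with misprints you partly caught. $M_{12}$ at $n=11$ should be $M_{11}$, the degree in (1)(b) is $(q^d-1)/(q-1)$, and the bound in (2)(a) is $A\Gamma L_d(q)$. Your overall route is the standard one: the cycle forces multiple transitivity, then you use the CFSG lists of $2$-, $3$- and $4$-transitive groups and check cycle types, with $k=0$ handled by Burnside--Schur.

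\textbf{The gap: Step~1 is false as stated, and your case $k\ge 3$ rests on it.} A primitive group with a subgroup that fixes $k$ points and is transitive on the other $n-k\ge 2$ need not be $(k+1)$-transitive. Take $L_3(2)$ acting on the seven points of the Fano plane. The pointwise stabilizer of a line is a Klein four-group; it fixes the three points of the line and is regular on the other four. Yet $L_3(2)$ preserves collinearity, so it is not even $3$-transitive. Jordan's theorem would need $H$ primitive on its support, and a cyclic group of composite order $n-k$ is not.

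For $k\le 2$ you can repair Step~1 by hand. If $G_x$ ($x\in F$) fixed another point $y$, then $G_x=G_y$. The sets $\{z: G_z=G_x\}$ are blocks, so primitivity would force $G_x=1$, contradicting $g\in G_x$. Hence $G$ is $2$-transitive, and for $k=2$ the group $G_{xy}\ni g$ is transitive on the remaining points, giving $3$-transitivity. For $k\ge 3$, exactly where you want $4$-transitivity to reduce to the Mathieu groups, the example above shows that no argument using only transitivity of $H$ can work.

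\textbf{The fix.} Use the cycle-specific theorem of Marggraf: a primitive group of degree $n$ containing an $m$-cycle with $1<m<n$ is $(n-m+1)$-transitive. Cyclicity is essential because a regular cyclic group has exactly one block system for each block size dividing $m$. In the Fano example, the three fixed points of the line induce three different pairings of the four points off it, which no $4$-cycle fixing that line could allow. With Marggraf's theorem in place of your Step~1, the reduction of $k\ge 3$ to $M_{11},M_{12},M_{23},M_{24}$ and the cycle-type check go through.

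\textbf{Two smaller points for the sweep.}
\begin{enumerate}
\item Your bullets in Steps~2 and~3 only show that the listed groups do contain the required cycles. The converse is also needed. In the affine case it is a genuine theorem, not an element-order check: Kantor's result that a subgroup $G_0\le GL_d(p)$ containing a Singer cycle satisfies $GL_e(q)\trianglelefteq G_0\le\Gamma L_e(q)$ with $q^e=p^d$.
\item Your list of $3$-transitive groups omits the $3$-transitive subgroups of $P\Gamma L_2(q)$ that do not contain $PGL_2(q)$, such as $M_{10}$. Your split-torus argument does exclude them, but they must be on the list.
\end{enumerate}
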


\begin{lemma}[{\cite[Corollary~1.3]{jones2014}}]\label{jones1.3}
    Let \(G\) be a primitive permutation group of finite degree \(n\), containing a cycle with k fixed points. Then \(G\ge A_{n}\) if \(k\ge 3\), or if \(k=0,1\ \textit{or}\ 2\) and n avoids the values listed in parts \ref{jones1}, \ref{jones2} or \ref{jones3} of lemma~\ref{jones1.2}. 
\end{lemma}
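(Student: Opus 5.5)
We argue by contraposition from Lemma~\ref{jones1.2}. Let \(G\) be a primitive group of degree \(n\) containing a cycle with \(k\) fixed points, and suppose, aiming for a contradiction, that \(G\) does not contain \(A_n\). We must show that \(k\le 2\) and that \(n\) is one of the degrees listed in the matching part of Lemma~\ref{jones1.2}.

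First we check that Lemma~\ref{jones1.2} applies. A cycle here means a nontrivial cycle, of length \(\ell\ge 2\). Its number of fixed points is therefore \(k=n-\ell\le n-2\). Hence \(0\le k\le n-2\), and \(G\) satisfies every hypothesis of Lemma~\ref{jones1.2}: it is primitive of finite degree \(n\), it does not contain \(A_n\), and it contains a cycle fixing \(k\) points.

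Lemma~\ref{jones1.2} then gives that one of its three alternatives holds. Each alternative states explicitly that \(k=0\), \(k=1\) or \(k=2\). So if \(k\ge 3\) no alternative can hold, which is a contradiction; therefore \(G\ge A_n\) whenever \(k\ge 3\).

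Now suppose \(k\in\{0,1,2\}\) and \(n\) avoids the degrees listed in parts \ref{jones1}, \ref{jones2}, \ref{jones3} of Lemma~\ref{jones1.2}. The conclusion of that lemma must then be the part corresponding to this particular \(k\). That part forces \(n\) to take one of its listed forms: \(p\), \((q^d-1)/(q-1)\), \(11\) or \(23\) when \(k=0\); \(q^d\), \(p+1\), \(12\) or \(24\) when \(k=1\); and \(q+1\) when \(k=2\). This contradicts the choice of \(n\), so again \(G\ge A_n\).

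The argument is purely formal once Lemma~\ref{jones1.2} is granted; the deep input, which uses the classification of finite simple groups, lies entirely in that lemma. The only point needing care is to read ``avoids the values listed'' with respect to the part of Lemma~\ref{jones1.2} matching the given \(k\). Even if one reads it as avoiding all listed degrees, the conclusion only strengthens, so the proof goes through under either reading. No separate obstacle arises.
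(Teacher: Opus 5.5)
Your proposal is correct and is the standard derivation: the paper quotes this result from Jones without proof, and it follows from Lemma~\ref{jones1.2} by exactly the contrapositive you give. That includes the needed observations that a nontrivial cycle forces $0\le k\le n-2$ and that ``avoids the values listed'' is read part-by-part. One small wording slip: under the ``all parts'' reading it is the hypothesis, not the conclusion, that becomes stronger, which is why your proof still covers that case.
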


\begin{lemma}[{\cite[Theorem~1.3]{Iradmusa2018}}]\label{Iradmusa}
    Let \(G\) be the permutation subgroup of \(S_{n}\) generated by the n-cycle \(\sigma:=\left(1\ 2\cdots  n\right)\) and the 3-cycle \(\delta:=(a\ b\ c)\) with even (respectively n odd) and \(1\le a<c<b\le n\). Then, \(G=S_{n}\) (respectively \(G=A_{n}\)) if and only if
    \(\gcd (b-a,c-a,n)=1\).
\end{lemma}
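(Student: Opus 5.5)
The approach is to prove necessity by exhibiting an explicit block system, and sufficiency by proving primitivity and then invoking Lemma~\ref{jones1.3}. Write \(\delta=(a\ b\ c)\), so \(a\mapsto b\mapsto c\mapsto a\).

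\emph{Necessity.} Suppose \(d=\gcd(b-a,c-a,n)>1\). Partition \(\Omega\) into the \(d\) residue classes modulo \(d\).
\begin{enumerate}[label=(\roman*)]
\item Because \(d\mid n\), the \(n\)-cycle \(\sigma\) sends each class to the next one.
\item The points \(a,b,c\) lie in a single class, and \(\delta\) fixes every other point, so \(\delta\) maps every class onto itself.
\end{enumerate}
Hence \(G\) preserves a nontrivial block system, so \(G\) is imprimitive and therefore a proper subgroup of \(\operatorname{Alt}_n\) and of \(S_n\).

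\emph{Sufficiency, step 1: primitivity.} The group \(G\) is transitive because it contains \(\sigma\). Any block system of \(G\) is in particular invariant under the regular cyclic group \(\langle\sigma\rangle\cong\mathbb Z_n\). Such a system therefore consists of the cosets of a subgroup of \(\mathbb Z_n\), namely the residue classes modulo some divisor \(e\) of \(n\); the blocks then have size \(n/e\). We must show that \(\delta\)-invariance forces \(e=1\) or \(e=n\). So assume the blocks have size at least \(2\), and let \(B\) be a block containing one of \(a,b,c\).
\begin{enumerate}[label=(\roman*)]
\item Suppose \(B\) contains a point \(x\notin\{a,b,c\}\). Then \(\delta(x)=x\), so \(\delta(B)\cap B\neq\varnothing\) and hence \(\delta(B)=B\). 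Since \(\delta\) cycles \(a,b,c\), all three points then lie in \(B\).
\item Otherwise \(B\subseteq\{a,b,c\}\), so the block size is \(2\) or \(3\). Size \(3\) gives \(B=\{a,b,c\}\) directly. Size \(2\), say \(B=\{a,b\}\) (the other pairs are symmetric), is impossible: \(\delta(B)=\{b,c\}\) meets \(B\) without equalling it.
\end{enumerate}
In every case \(a,b,c\) lie in one residue class modulo \(e\). Thus \(e\mid b-a\), \(e\mid c-a\) and \(e\mid n\), so \(e\mid\gcd(b-a,c-a,n)=1\). Hence the block system is trivial and \(G\) is primitive.

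\emph{Sufficiency, step 2: conclusion.} The group \(G\) is primitive and contains the \(3\)-cycle \(\delta\), which fixes \(n-3\) points. When \(n\ge 6\) we have \(n-3\ge3\), so Lemma~\ref{jones1.3} gives \(G\ge\operatorname{Alt}_n\). For parity:
\begin{enumerate}[label=(\roman*)]
\item \(\delta\) is always even;
\item \(\sigma\) is odd exactly when \(n\) is even.
\end{enumerate}
So \(G=S_n\) for \(n\) even and \(G=\operatorname{Alt}_n\) for \(n\) odd. The remaining cases \(n\le5\) form a finite list, which I would settle by direct inspection, for example by noting that a primitive group on at most \(5\) points containing a \(3\)-cycle already contains \(\operatorname{Alt}_n\).

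The main obstacle is step 1. Two points need care there: first, that every block system of \(G\) is a system of residue classes (this follows from the regularity of \(\langle\sigma\rangle\)); second, the small block sizes \(2\) and \(3\), where a block need not contain a point fixed by \(\delta\) and so the argument of case (i) is unavailable. Both are handled by the short case analysis above.
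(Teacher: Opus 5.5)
The paper gives no proof of this statement. It is quoted from \cite{Iradmusa2018} and used only once, in the case $a=1$ of Lemma~\ref{An}, and only in the direction $\gcd(b-a,c-a,n)=1\Rightarrow\langle\sigma,\delta\rangle\ge\operatorname{Alt}_n$. So there is no internal argument to compare with, and your proposal has to stand as a self-contained replacement. It does. Each step is correct:
\begin{enumerate}
\item Necessity via the residue classes modulo $d$ works.
\item Block systems of the regular group $\langle\sigma\rangle$ are indeed residue classes modulo a divisor $e$ of $n$. The block through $n\equiv0$ is fixed by every $\sigma^x$ with $x$ in it, so it is a subgroup of $\mathbb Z_n$.
\item Your case analysis correctly forces $a,b,c$ into one block.
\item Lemma~\ref{jones1.3} with $k=n-3\ge3$ finishes the case $n\ge6$.
\end{enumerate}
This is the same toolkit the paper itself uses in Lemma~\ref{An} and in the $n+2$ theorem: block-chasing around the support of a cycle, then Jones's results. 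Your argument would therefore let the paper drop the external citation at almost no cost. In the paper's application one has $N\ge7$, so your small-case discussion is not even needed there.

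Three small corrections.
\begin{enumerate}
\item In the necessity part, $G$ is not ``a proper subgroup of $\operatorname{Alt}_n$'' when $n$ is even, since $\sigma$ is then odd. What you need, and what your block system gives, is $G\ne S_n$ and $G\ne\operatorname{Alt}_n$, because both of these groups are primitive for $n\ge3$.
\item You should also say why the residue partition is nontrivial: $d\le b-a<n$ and $d\mid n$, so there are $d\ge2$ classes, each of size $n/d\ge2$.
\item For $n\le5$, instead of an inspection you can quote Jordan's classical theorem: a primitive group containing a $3$-cycle contains $\operatorname{Alt}_n$. It holds for every $n$, and it would make the appeal to Lemma~\ref{jones1.3} unnecessary altogether. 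If you prefer to inspect, it reduces to orders. For $n=3$, $\langle\delta\rangle=\operatorname{Alt}_3$. For $n=4$ and $n=5$, a transitive group containing a $3$-cycle has order divisible by $12$ and $15$ respectively, which already forces $G\ge\operatorname{Alt}_n$.
\end{enumerate}
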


\begin{lemma}[{\cite[Theorem~1.1]{jones2014}}]\label{jones1.1}
    Let \(G\) be a primitive permutation group of finite degree \(n\), containing a cycle of prime length fixing at least three points. Then \(G\ge A_{n}\).
\end{lemma}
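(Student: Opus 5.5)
The plan is to obtain this statement as a direct special case of Lemma~\ref{jones1.3}, and to indicate a self-contained classical route (Jordan's theorem) in case one wants to avoid relying on the classification behind Lemma~\ref{jones1.2}.

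\emph{Step 1 (reduction).} Let \(G\) be primitive of degree \(n\) and let \(g\in G\) be a \(p\)-cycle, \(p\) prime, fixing \(k\ge 3\) points. Then \(g\) is in particular a cycle with \(k\) fixed points and \(k\ge 3\). Lemma~\ref{jones1.3} then gives \(G\ge A_n\) immediately. Primality of \(p\) is not even needed for this deduction. So, granted the earlier lemmas, the proof is one line.

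\emph{Step 2 (independent classical argument, if desired).} Here the aim is to prove that \(G\) contains \(A_n\) using only primitivity and the prime cycle. Let \(\Delta=\operatorname{supp}(g)\), so \(|\Delta|=p\) and \(|\Omega\setminus\Delta|=k\ge 3\).
\begin{enumerate}[label=(\roman*)]
\item Take a subset \(\Gamma\supseteq\Delta\) maximal such that the pointwise stabiliser \(G_{(\Omega\setminus\Gamma)}\) acts primitively on \(\Gamma\). This is a Jordan set containing \(\Delta\); it exists since \(\langle g\rangle\) is transitive, hence primitive, on \(\Delta\) (prime degree).
\item Use primitivity of \(G\) on \(\Omega\) to find \(h\in G\) with \(h(\Gamma)\neq\Gamma\) and \(h(\Gamma)\cap\Gamma\neq\varnothing\). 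Show that \(\langle G_{(\Omega\setminus\Gamma)},\,hG_{(\Omega\setminus\Gamma)}h^{-1}\rangle\) is primitive on \(\Gamma\cup h(\Gamma)\). This is the standard Jordan lemma: two primitive groups on overlapping sets generate a primitive group on the union.
\item The resulting enlargement contradicts maximality unless \(\Gamma=\Omega\). Carrying the induction one point at a time shows that \(G\) is \((k+1)\)-fold transitive.
\item With \(k\ge 3\), \(G\) is at least \(4\)-transitive and contains a \(p\)-cycle with \(p\le n-3\). A standard conjugation argument then produces a \(3\)-cycle: commutators of \(g\) with suitable conjugates moving a single point of \(\Delta\) outside \(\Delta\) reduce the support. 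A primitive group containing a \(3\)-cycle contains \(A_n\).
\end{enumerate}

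\emph{Main obstacle.} The delicate step is (ii)–(iii): controlling the overlap \(|h(\Gamma)\cap\Gamma|\) so that the union strictly grows while primitivity is preserved. This is where primality of \(p\) and the bound \(k\ge3\) are essential. For the purposes of this paper, however, I would simply present Step~1 and cite Jordan's theorem for the classical argument.
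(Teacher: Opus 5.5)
Your Step~1 is correct. A \(p\)-cycle fixing at least three points is in particular a cycle with \(k\ge 3\) fixed points, so Lemma~\ref{jones1.3} applies verbatim, and, as you note, primality is superfluous. The paper gives no argument of its own here: it simply quotes the statement, which is Jordan's classical theorem as recorded in \cite[Theorem~1.1]{jones2014}. So the two routes differ only in which black box is invoked.

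Your choice shows that Lemma~\ref{jones1.1} is redundant in this paper. Its only use is for the \(3\)-cycle \((1\ N\ 3)\) with at least six fixed points in the case \(t=1\), and there Lemma~\ref{jones1.3} could be cited directly. The price is that you derive an elementary, classification-free result from Jones's far deeper generalisation, which depends on the classification of finite simple groups. Step~1 is therefore a valid deduction inside the paper's framework, but not an independent proof of Jordan's theorem.

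Step~2, if you ever want it, needs repair in two places.

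In (i)--(iii), \(\Omega\) is itself a primitive Jordan set because \(G\) is primitive. A maximal \(\Gamma\supseteq\Delta\) is therefore just \(\Omega\), and the maximality argument proves nothing. Instead, take \(\Gamma\) maximal among \emph{proper} primitive Jordan sets containing \(\Delta\). By the union lemma of your step (ii), every \(h\in G\) satisfies \(h(\Gamma)=\Gamma\), \(h(\Gamma)\cap\Gamma=\varnothing\), or \(\Gamma\cup h(\Gamma)=\Omega\).
\begin{enumerate}[label=(\alph*)]
\item If \(|\Gamma|\le|\Omega\setminus\Gamma|\), take an \(h\) witnessing that \(\Gamma\) is not a block. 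Then \(\Gamma\cup h(\Gamma)\) is a proper primitive Jordan set of size at most \(2|\Gamma|-1<n\), contradicting maximality.
\item Otherwise the trichotomy makes \(\Omega\setminus\Gamma\) a block, hence a single point \(\alpha\). So \(G_\alpha\) is primitive on \(\Omega\setminus\{\alpha\}\) and contains \(g\).
\end{enumerate}
Induction on \(k\) then gives \((k+1)\)-transitivity. No overlap control is needed, and \(k\ge 3\) plays no role at this stage.

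In (iv), a conjugate of \(g\) whose support differs from \(\Delta\) in one point gives a commutator supported on up to \(p+1\) points, so it reduces nothing. The commutator of two cycles is a \(3\)-cycle when their supports meet in exactly one point. However, a \(p\)-subset meeting \(\Delta\) in exactly one point exists only if \(2p\le n+1\); in that range \(k+1\ge p\), so transitivity does supply the required conjugate. For \(p>(n+1)/2\) you need a separate argument. One option is to pass to the pointwise stabiliser of all but three points of \(\Omega\setminus\Delta\), which is primitive of degree \(p+3\) and contains \(g\), and settle that case directly.
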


\section{Main results}
We now prove the main theorems.
\begin{theorem}\label{m+k<n}
    If $m+k<n-1$, then $\left \langle r_{n},r_{m},r_{k} \right \rangle <S_{n}$. Moreover, if n is even and $m+k<n$, then $\left \langle r_{n},r_{m},r_{k} \right \rangle <S_{n}$.
\end{theorem}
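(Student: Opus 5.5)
The plan is to exhibit a nontrivial invariant structure for the action of $G_n=\langle r_n,r_m,r_k\rangle$ on positions $\Omega=\{1,\dots,n\}$. Intransitivity is ruled out, since $r_n$ and $r_m$ already move every position. So the target is either a nontrivial block system, or a sign obstruction in the even case.

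First I would write out the pieces that are understood exactly. The product $r_nr_m$ acts as $j\mapsto j+a$ on $\{1,\dots,m\}$ and as $j\mapsto n+1-j$ on $\{m+1,\dots,n\}$. So $\langle r_n,r_m\rangle$ is a dihedral-type group. It is convenient to realise it on a circle of $n+1$ points, adjoining a point $0$ between positions $n$ and $1$. There $r_n$ is the reflection fixing $0$, and $r_m$ acts as a reflection of the arc $\{1,\dots,m\}$ that fixes the complementary arc containing $0$.

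The hypothesis $m+k<n-1$, i.e. $k\le a-2$, says that the support $\{1,\dots,k\}$ of $r_k$ is far from its mirror image $\{n+1-k,\dots,n\}$ under $r_n$. There are at least two untouched positions between $\{1,\dots,k\}$ and $\{m+1,\dots,n\}$ after translating by $a$. So I would look for an invariant partition built from the $\langle r_n,r_m\rangle$-orbit structure on pairs of positions: for instance, the partition of $\Omega$ generated by $j\sim j+a$ and the reflection identifications, truncated to the window where $r_k$ acts. The key claim to verify is that $r_k$ permutes these classes, because every class meeting $\{1,\dots,k\}$ meets it in a set closed under $j\mapsto k+1-j$ and does not meet $\{k+1,\dots,m\}$ in a way that would break the partition. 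If this holds, $G_n$ is imprimitive and hence proper.

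An alternative route, which I would try in parallel, uses Lemma~\ref{jones1.3}. Suppose $G_n$ were primitive. It contains $r_k$, and short words in $r_m,r_k$ (e.g. $(r_mr_k)^{j}$) produce cycles fixing at least three points. Then $G_n\ge\operatorname{Alt}_n$. To get a contradiction one then needs an invariant such as sign or an orbit count on ordered pairs. I expect this route is weaker, so I would prioritise the block approach.

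For the second statement, with $n$ even and $m+k=n-1$ (the only new case), I would use a sign obstruction. The sign of $r_i$ is $(-1)^{\lfloor i/2\rfloor}$. With $n$ even and $m+k$ odd, one of $m,k$ is even and the other odd. I would try to show that on the borderline the block structure above degenerates to a coarser $G_n$-invariant partition into $n/2$ pairs, which parity of $n$ makes consistent. Failing that, I would show that $G_n$ lies in the even part of a wreath product $S_2\wr S_{n/2}$.

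The main obstacle is identifying the correct invariant partition. The natural candidates fail: the pairs $\{j,n+1-j\}$ are not respected by $r_k$, since $r_k$ moves $j$ but fixes $n+1-j$. The genuine invariant must exploit the gap $a-k\ge2$, so I would first compute small cases, such as $n=9$ with $(m,k)=(5,2)$ and $(4,3)$, to read off the orbit structure before attempting the general argument.
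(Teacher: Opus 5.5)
There is a genuine gap, and it begins with your opening claim. It is false that intransitivity is ruled out. Moving every point has nothing to do with transitivity, and even the premise that $r_n,r_m$ move every point can fail. Your own test cases show this:
\begin{itemize}
\item for $n=9$, $(m,k)=(5,2)$, the set $\{3,7\}$ is a $G_9$-orbit;
\item for $n=9$, $(m,k)=(4,3)$, the point $5$ is fixed by $r_9$, $r_4$ and $r_3$.
\end{itemize}
In fact, for odd $n$ the hypothesis $m+k<n-1$ always makes $G_n$ intransitive (see below). So a block-system argument is the wrong frame there, and you have discarded the actual obstruction.

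Beyond this, the partition you propose (``generated by $j\sim j+a$ and the reflection identifications, truncated to the window'') is never defined. Its key property is never checked, so no case of the statement is actually proved.

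Your plan for the even borderline case also fails. Take $n=6$, $(m,k)=(3,2)$, so that $m+k=n-1$. Here $r_2=(1\,2)$ is odd, so no sign argument can work. Moreover, $\langle r_2,r_3\rangle$ acts as the full symmetric group on $\{1,2,3\}$ while fixing $4,5,6$. Hence $G_6$ preserves no partition into pairs and is not contained in $S_2\wr S_3$. What it does preserve is the partition $\{\{1,2,3\},\{4,5,6\}\}$ into two halves.

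The missing idea is a count of transpositions. Since $G_n$ is generated by involutions, its orbits are the connected components of the graph $X$ on $\Omega$ whose edges are the $2$-cycles of $r_n,r_m,r_k$. This graph has $\lfloor n/2\rfloor+\lfloor m/2\rfloor+\lfloor k/2\rfloor$ edges.
\begin{itemize}
\item For odd $n$ and $m+k\le n-2$, this is at most $n-2$. So $X$ is disconnected and $G_n$ is intransitive.
\item For even $n$ and $m+k\le n-1$, it is at most $n-1$. So either $X$ is disconnected, or $X$ is a tree.
\end{itemize}
In the tree case, colour each vertex by the parity of the number of $r_n$-edges on its path from a fixed root. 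The generators $r_m$ and $r_k$ move points only along their own edges, so they preserve colours. Since $n$ is even, $r_n$ is fixed-point-free, so it swaps the two colour classes. This gives a nontrivial block system with two blocks of size $n/2$.

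Note that the hypothesis enters only through this edge deficit. It does not enter through the geometry of the gap between $\{1,\dots,k\}$ and its translate, which is where you were looking.
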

\begin{proof}
    We construct an undirected graph $X$, with vertex $V(X)=\Omega=\left\{1,2,\ldots,n\right\}$  and edge set defined as follows: for $q\in \left\{n,m,k\right\}$ and integers $i$ satisfying $1\le i<q+1-i\le q$, join q and q+1-i. Since $r_{q}=(1\ q)(2\ q-1)\ldots$, the number of edges generated by $r_{q}$ is $\left \lfloor  \frac{q}{2} \right \rfloor $. Therefore, the total number of edges in graph $X$ is $\left | E(X) \right | =\left \lfloor  \frac{n}{2} \right \rfloor+\left \lfloor  \frac{m}{2} \right \rfloor+\left \lfloor  \frac{k}{2} \right \rfloor$.

    In graph $X$, if $r_{q}(x)\ne x$, then $x$ is adjacent to $r_{q}$. Consequently, for any generator $r_{q}$, a point $x$ can either move in one direction or remain fixed. In contrast, if $x$ and  $y$ are adjacent, there exists $q\in \left\{n,m,k\right\}$ such that $y=r_{q}$.Hence, two vertices belong to the same orbit if and only if they belong to the same connected component. Therefore, if $X$ is disconnected, $G_{n}$ does not act transitively on $\Omega$, implying $G_{n}\ne S_{n}$.

    Next,  we consider the parity of $n$.
    
    When $n$ is odd, let $n=2l+1$.
    \begin{gather*}
        \left|E(X)\right|=l+\left \lfloor  \frac{m}{2} \right \rfloor+\left \lfloor  \frac{k}{2} \right \rfloor\le l+\left \lfloor  \frac{m+k}{2} \right \rfloor\le l+\left \lfloor  \frac{2l-1}{2} \right \rfloor=n-2.
    \end{gather*}
    Since a connected graph with $n$ vertices has at least $n-1$ edges, but $X$ has at most $n-2$ edges, $X$ is disconnected. Thus $G_{n}\ne S_{n}$.

    When $n$ is even, let $n=2l$.
    \begin{gather*}
        \left|E(X)\right|=l+\left \lfloor  \frac{m}{2} \right \rfloor+\left \lfloor  \frac{k}{2} \right \rfloor\le l+\left \lfloor  \frac{m+k}{2} \right \rfloor\le l+\left \lfloor  \frac{2l-1}{2} \right \rfloor=n-1.
    \end{gather*}
    If $\left|E(X)\right|<n-1$, $X$ is disconnected, then $G\ne S_{n}$. In the following, we suppose that $X$ is connected. Then $\left|E(X)\right|=n-1$, implying that $X$ is a tree. Let $v_{0}$ be the root of the tree, and denote the unique path from $v_{0}$ to v by $p(v)$.

    Define $c_{v}$ to be the parity (modulo 2) of the number occurrences of $r_{n}$ alone the path $p_{v}$.Then let
    \begin{gather*}
        B_{0}=\left\{v\in \Omega:c_{v}=0\right\},\qquad  B_{1}=\left\{v\in \Omega:c_{v}=1\right\}.
    \end{gather*}
    For $m$ and $k$, the parity of the number of occurrences of $r_{n}$ remains unchanged. Thus
    \begin{gather*}
        c\left(r_{m}(v)\right)=c(v),\qquad c\left(r_{k}(v)\right)=c(v).
    \end{gather*}
    Therefore,
    \begin{gather*}
        r_{m}(B_{0})=B_{0},\qquad r_{m}(B_{1})=B_{1},\qquad r_{k}(B_{0})=B_{0},\qquad r_{k}(B_{1})=B_{1}.
    \end{gather*}
    Furthermore, since n is even, $r_{n}$ has no fixed points. Consequently, every move by $r_{n}$ in the graph $X$ changes the number of $c_{v}$ exactly once. Therefore,
    \begin{gather*}
        c(r_{n}(v)) \equiv c(v)+1 \pmod{2} .
    \end{gather*}
    Hence
    \begin{gather*}
        r_{n}(B_{0})=B_{1},\qquad r_{n}(B_{1})=B_{0}.
    \end{gather*}
    In conclusion, $B_{1}$ and $B_{0}$ are two imprimitive blocks of $\Omega$. Therefore $G_{n}\ne S_{n}$. 
    
    Consequently, Theorem holds in both the even and odd cases.
\end{proof}

\noindent\textbf{Remark.} Originally, in \cite{Blanco2025} the author formulate Conjecture 4 as follow: If\(m+k<n\) then $\left \langle r_{n},r_{m},r_{k} \right \rangle <S_{n}$. Moreover, if \(n\) is odd and \(m+k<n-1\) then $\left \langle r_{n},r_{m},r_{k} \right \rangle <S_{n}$. For \(n\) odd, this formulation may be ambiguous. Thus we reformulate the statement.

To prove the remaining two problems, we first establish the following lemma.

\begin{lemma}
      If \(G_{n} =S_{n}\), then \(\gcd(a,b)=1\).
\end{lemma}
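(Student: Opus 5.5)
We argue by contraposition: assume \(d=\gcd(a,b)>1\) and produce a nontrivial \(G_{n}\)-invariant partition of \(\Omega\) (or appeal to Lemma~\ref{Douglas}). The key observation is that \(a=n-m\) and \(b=n-k\) are both divisible by \(d\). Hence
\[
n\equiv m\equiv k\pmod d .
\]
So every one of the three generators acts, on the positions it moves, by the same affine map modulo \(d\).

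First we check this. For \(q\in\{n,m,k\}\) and \(j\le q\) we have \(r_q(j)=q+1-j\equiv n+1-j \pmod d\). For \(j>q\) the point \(j\) is fixed. Let \(\iota\) be the involution \(x\mapsto n+1-x\) of \(\mathbb{Z}_d\), and for each \(\iota\)-orbit \(O\subseteq\mathbb{Z}_d\) set \(C_O=\{j\in\Omega: j \bmod d\in O\}\). Each generator sends a point \(j\) either to itself or to a point whose residue is \(\iota(j\bmod d)\). In both cases the image stays in the same class \(C_O\). Hence every \(C_O\) is \(G_{n}\)-invariant, and the \(C_O\) are unions of orbits. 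Since \(n\ge d\), every residue occurs in \(\Omega\), so each \(C_O\) is nonempty.

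Next we count \(\iota\)-orbits on \(\mathbb{Z}_d\). There are at least \(\lceil d/2\rceil\) of them. For \(d\ge3\) this gives at least two nonempty classes, so \(G_{n}\) is intransitive and \(G_{n}\ne S_n\).

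The main obstacle is the case \(d=2\). Here \(\iota\) may swap the two residues, which happens exactly when \(n\) is even, and then the partition is trivial. We handle \(d=2\) directly. In this case \(n,m,k\) all have the same parity.

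If \(n\) is odd, all three flip sizes are odd. This contradicts Lemma~\ref{Douglas}\ref{Douglas.3}; alternatively, \(\iota\) fixes both residues, so odd and even positions are never mixed.

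If \(n\) is even, then \(\gcd(n,m,k)\ge2\). This contradicts Lemma~\ref{Douglas}\ref{Douglas.5}. Concretely, the pairs \(\{2i-1,2i\}\) form a block system preserved by every even-length prefix reversal, so \(G_{n}\) is imprimitive.

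In all cases \(G_{n}<S_n\), which proves the lemma.
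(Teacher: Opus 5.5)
Your proof is correct and uses the same idea as the paper. Since $d$ divides both $a$ and $b$, every generator acts as $x\mapsto n+1-x$ modulo $d$ on the points it moves, so residue classes paired by this involution are invariant; the paper's set $B_0$ is exactly your class $C_O$ for the orbit of $1$. The case $d=2$ is handled by Lemma~\ref{Douglas}~\ref{Douglas.3} and~\ref{Douglas.5} in both arguments. The only difference is that you apply the invariance directly to the generators of $G_n$. The paper instead passes through the normal subgroup $K=\langle T_a,T_b\rangle$ and its transitivity, which is not needed, because $r_n$ itself preserves the same classes.
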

\begin{proof}
    We first consider action
    \[
    T_a=r_{n-a}r_n=r_mr_n,\qquad T_b=r_{n-b}r_n=r_kr_n.
    \]
    Since \(r_n^2=1\), we have
    \[
    r_m=T_ar_n,\qquad r_k=T_br_n,
    \]
    and hence
    \[
    G_n=\langle r_n,r_m,r_k\rangle
    =\langle r_n,T_a,T_b\rangle.
    \]
    Let \(K=\langle T_a,T_b\rangle\). Observe that
    \[
    r_nT_ar_n=r_nr_m=T_a^{-1}\in K,
    \qquad
    r_nT_br_n=r_nr_k=T_b^{-1}\in K.
    \]
    Thus \(r_n\) normalizes \(K\), so \(K\trianglelefteq G_n\). Since \(G_n=S_n\), it follows that
    \[
    A_n\le K.
    \]
    In particular, \(K\) is transitive on \(\{1,\ldots,n\}\).

    Suppose, for a contradiction, that \(g=\gcd(a,b)>1\). For each \(d\in\{a,b\}\), we have \(g\mid d\), and the action of \(T_d\) is given by
    \[
    T_d(x)=
    \begin{cases}
        x+d, & 1\le x\le n-d,\\
        n+1-x, & n-d<x\le n.
    \end{cases}
    \]
    Consequently,
    \[
    T_d(x)\equiv
    \begin{cases}
        x \pmod g, & 1\le x\le n-d,\\
        n+1-x \pmod g, & n-d<x\le n.
    \end{cases}
    \]
    Consider the subset
    \[
    B_0=\bigl\{x\in\{1,\ldots,n\}:
    x\equiv1\pmod g
    \text{ or }
    x\equiv n\pmod g\bigr\}.
    \]
    The map \(x\mapsto n+1-x\) interchanges the residue classes of \(1\) and \(n\) modulo \(g\). It follows that
    \[
    T_a(B_0)=B_0,
    \qquad
    T_b(B_0)=B_0.
    \]
    Thus \(B_0\) is \(K\)-invariant. If \(g\ge3\), then \(B_0\) is a nonempty proper subset of \(\{1,\ldots,n\}\), since it contains at most two residue classes modulo \(g\). This contradicts the transitivity of \(K\).

    It remains to consider \(g=2\). In this case, both \(a\) and \(b\) are even, so \(n,m,k\) have the same parity. If \(n\) is odd, then all three indices are odd, contrary to lemma~\ref{Douglas}~\ref{Douglas.3}, which requires at least one of them to be even. If \(n\) is even, then all three indices are even, contrary to lemma~\ref{Douglas}~\ref{Douglas.5}, which requires \(\gcd(n,m,k)=1\). Hence \(g=2\) is also impossible.

    We conclude that \(\gcd(a,b)=1\).
\end{proof}

\begin{lemma}\label{An}
    If \(G_{n}=S_{n}\) and \(t\ge 3\), assume additionally that \(N\) is odd whenever both \(a\) and \(b\) are odd, then \(A_{n}\le G_{N}\).
\end{lemma}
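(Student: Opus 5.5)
The plan is to apply Lemma~\ref{jones1.1} to \(G_N=\langle r_N,r_{N-a},r_{N-b}\rangle\), which keeps the same differences \(a,b\) as \(G_n\) but has degree \(N\in\{n+2,n+4\}\). I read \(s\) as \(a+b\), so \(t=N-a-b\); the conclusion to prove is \(A_N\le G_N\). There are three things to establish: \(G_N\) is transitive, \(G_N\) is primitive, and \(G_N\) contains a cycle of prime length fixing at least three points. Throughout I work with \(T_a=r_{N-a}r_N\) and \(T_b=r_{N-b}r_N\), exactly as in the proof of the previous lemma. These act on \(\{1,\ldots,N\}\) by \(x\mapsto x+d\) for \(x\le N-d\) and \(x\mapsto N+1-x\) otherwise, with \(d\in\{a,b\}\). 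The hypothesis \(G_n=S_n\) enters through the previous lemma, which gives \(\gcd(a,b)=1\), and through the fact that \(a,b\) are unchanged when passing from \(n\) to \(N\).

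\emph{Transitivity.} The maps \(x\mapsto x+a\) and \(x\mapsto x+b\) are available on the initial segments \([1,N-a]\) and \([1,N-b]\), and \(r_N\) reflects. Since \(\gcd(a,b)=1\) and \(t\ge 3\), the segments overlap enough for Bezout-type steps to connect every point to \(1\). Equivalently, the orbit graph \(X\) from the proof of Theorem~\ref{m+k<n}, built for \(N\), is connected.

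\emph{Primitivity.} Suppose \(\mathcal B\) is a nontrivial block system.
\begin{itemize}
\item Let \(B\) be the block containing \(1\). Using the translations \(x\mapsto x+a\) and \(x\mapsto x+b\) on long intervals, together with \(\gcd(a,b)=1\), I would show that \(B\) is a union of residue classes modulo some \(g\).
\item The classes must be compatible with the reflection \(x\mapsto N+1-x\). The only surviving possibility should be the two-block system by parity of positions.
\item That system is preserved by \(G_N\) exactly when \(N-a\) and \(N-b\) are odd and \(N\) is even: in that case \(r_N\) swaps the parity classes and \(r_{N-a},r_{N-b}\) fix them. This configuration means \(a,b\) odd and \(N\) even, which the extra hypothesis excludes.
\end{itemize}

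\emph{Prime cycle.} I would look for a short word in \(T_a^{\pm1},T_b^{\pm1}\) whose support lies in the first \(a+b\) positions together with a few mirrored positions, so that it fixes at least \(t\ge 3\) points; commutators such as \([T_a,T_b]\) or \(T_aT_b^{-1}T_a^{-1}T_b\) are the first candidates. I would first compute its cycle type explicitly. Then I would raise it to a power that kills every cycle except those of one prime length \(p\), aiming to keep a single \(p\)-cycle. Lemma~\ref{jones1.1} then gives \(A_N\le G_N\). If only an arbitrary cycle with at least three fixed points is produced, Lemma~\ref{jones1.3} gives the same conclusion.

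The main obstacle is this last step: finding an explicit element whose cycle structure is controlled and contains a single prime cycle. If a direct commutator is too messy, my fallback is an induced argument. The element \(r_{N-a}r_{N-b}\) acts on \([1,N-a]\) like a shift \(T_{b-a}\) in degree \(N-a\), so I would try to locate inside \(G_N\) a copy of a smaller generating configuration, perhaps a conjugate related to \(G_n=S_n\). That copy would supply a transposition or a \(3\)-cycle fixing \(N-n\ge 2\) extra points. A \(3\)-cycle fixing at least three points suffices by Lemma~\ref{jones1.1}.
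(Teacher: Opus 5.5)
There is a genuine gap, and it is the step you flag as the main obstacle: no cycle is actually produced, and your primary strategy provably fails in one of the two cases.

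Compute \(D=T_aT_bT_a^{-1}T_b^{-1}\) explicitly, as the paper does. Then \(D\) fixes every point above \(s=a+b\) and acts on \(\{1,\dots,s\}\) as \(x\mapsto x+2a \pmod s\). Since \(\gcd(a,s)=\gcd(a,b)=1\), the two parity cases behave differently.

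\emph{Opposite parity.} Here \(D\) is a single \(s\)-cycle fixing \(t\ge3\) points. Once primitivity is known, Lemma~\ref{jones1.3} applies directly. No prime cycle is needed, and powers of an \(s\)-cycle would give several \(p\)-cycles anyway.

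\emph{Both odd.} Here \(D\) is a product of two disjoint \((s/2)\)-cycles, one on the odd and one on the even positions. Every power of \(D\) is therefore trivial or has at least two nontrivial cycles of the same length. Your other candidate \(T_aT_b^{-1}T_a^{-1}T_b\) is conjugate by \(T_b\) to \(D^{-1}\), so it has the same defect. ``Raise it to a suitable power'' cannot yield a single cycle in this case.

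The paper needs a new construction for the both-odd case. With \(U=(1\,3\cdots s-1)(2\,4\cdots s)\) and \(h=\min\{a,t\}\), it forms a shifted conjugate \(V\) supported on \(\{h+1,\dots,h+s\}\), namely \(T_aUT_a^{-1}\) or \(r_NU^{-1}r_N\). Then \(VU^{-1}\) has two cycles of coprime lengths \(h+2\) and \(h\), so its \(h\)-th power is a single \((h+2)\)-cycle. The case \(a=1\) is handled separately: one extracts the \(3\)-cycle \((1\ s+1\ 3)\) from \(T_1=(1\,2\cdots N)\) and \(U\), and invokes Lemma~\ref{Iradmusa}. Your fallback of finding ``a copy of a smaller generating configuration'' inside \(G_N\) identifies no such copy and does not supply this.

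The primitivity step is also unproved. Two parts of your plan are fine.
\begin{itemize}
\item Your transitivity claim holds: partial translations by \(a\) and \(b\) connect \(\{1,\dots,N\}\) once \(N\ge a+b-1\).
\item Your observation that the parity partition is a block system when \(a,b\) are odd and \(N\) is even is correct, and it explains the extra hypothesis.
\end{itemize}
But the assertion that a block is a union of residue classes modulo some \(g\) has no supporting argument. \(T_a\) and \(T_b\) act as translations only on initial segments. Pushing a relation \(x\sim y\) along a B\'ezout path needs both points to stay in the admissible ranges at the same time, which fails when \(|x-y|\) is large. So ``only the parity system survives'' remains a guess.

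The paper reverses your order: it first builds \(U\) and then derives primitivity from it.
\begin{itemize}
\item A block meeting both the support of \(U\) and its fixed set is \(U\)-invariant, and is then forced to be all of \(\{1,\dots,N\}\).
\item Otherwise \(S\), \(T_a(S)\) and \(U(S\setminus T_a(S))\) are unions of blocks. This cuts out \(\{1\}\), or \(\{1,2\}\) in the both-odd case, where \(N\) odd is used.
\end{itemize}
Thus the explicit cycle construction is the missing ingredient for both halves of your plan.
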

\begin{proof}
    We first consider action
    \[
    D=T_{a}T_{b}T_{a}^{-1}T_{b}^{-1} .
    \]
    A direct computation yields
    \[
    D(x)=
    \begin{cases}
        x+2a, & 1\le x\le b-a,\\
        x+a-b, & b-a<x\le a+b,\\
        x, & a+b<x\le n.
    \end{cases}
    \]
    hence
    \[
    D(x)\equiv
    \begin{cases}
        x+2a \pmod s, & 1\le x\le s,\\
        x \pmod s, & s<x\le n.
    \end{cases}
    \]
    Taking a suitable power of \(D\), we obtain
    \begin{equation}\label{U}
    U=
    \begin{cases}
        (1\,2\,\cdots\,s), & \text{if \(a\) and \(b\) have opposite parity},\\[2mm]
        (1\,3\,\cdots\,s-1)(2\,4\,\cdots\,s), & \text{if \(a\) and \(b\) are both odd}.
    \end{cases}
    \end{equation}
    Let
    \[
    S=\left\{1,2\ldots ,s \right\},\qquad F=\left\{s+1,s+2,\ldots ,N\right\}.
    \]

    We now distinguish two cases according to the parities of \(a\) and \(b\).

    \begin{enumerate}[label=(\arabic*)]
    \item \(a\) and \(b\) have opposite parity.
    
    In this case,
    \[
    U=(1\,2\,\cdots\,s).
    \]
    We prove directly that \(G_{N}\) is primitive.

    Suppose that a proper block \(B\) meets both \(S\) and \(F\). Since \(U\) fixes every point of \(F\), any point of \(B\cap F\) belongs to both \(U(B)\) and \(B\). Thus
    \[
    U(B)\cap B\neq\varnothing,
    \]
    and the defining property of a block implies that \(U(B)=B\). As \(B\cap S\neq\varnothing\) and \( U\) acts transitively on \(S\), it follows that
    \[
    S\subseteq B.
    \]
    Hence,
    \[
    T_a(S)\cap S\neq\varnothing,
    \qquad
    T_b(S)\cap S\neq\varnothing.
    \]
    Consequently, \(T_a(B)\cap B\neq\varnothing\) and \(T_b(B)\cap B\neq\varnothing\), whence
    \[
    T_a(B)=B,
    \qquad
    T_b(B)=B.
    \]
    Thus \(B\) is invariant under \(K=\langle T_a,T_b\rangle\). The transitivity of \(K\) then forces \(B=\Omega\), contradicting the assumption that \(B\) is proper.

    Now suppose that \(G_{N}\) has a nontrivial block system \(\mathcal B\). By the preceding argument, every block in \(\mathcal B\) is contained entirely in either \(S\) or \(F\). Hence \(S\) is a union of blocks of \(\mathcal B\). Set
    \[
    h=\min(a,t),
    \qquad
    I=S\setminus T_a(S)=\{1,\ldots,h\}.
    \]
    Since \(H\) permutes the blocks of \(\mathcal B\), the set \(T_a(S)\) is also a union of blocks. Taking the difference shows that \(I\) is a union of blocks, as is \(U(I)\). Moreover,
    \[
    U(I)=\{2,\ldots,h+1\},
    \qquad
    I\setminus U(I)=\{1\}.
    \]
    It follows that \(\{1\}\) is a union of blocks and therefore is itself a block of \(\mathcal B\). By transitivity, all blocks in \(\mathcal B\) are singletons, contradicting its nontriviality. Thus \(G_{N}\) is primitive.

    Finally, \(G_{N}\) contains the \(s\)-cycle \(U\), which fixes \(t\geq3\) points. By 
    lemma~\ref{jones1.3},
    \[
    G_{N}\geq A_N.
    \]
    \item  \(a\) and \(b\) are both odd. Since \(\gcd(a,b)=1\), we have
    \[
    \gcd(a-b,a+b)=2.
    \]
    Taking a suitable power of \(D\), we therefore obtain
    \[
    U=(1\ 3\ 5\ \cdots\ s-1)(2\ 4\ 6\ \cdots\ s).
    \]
    Set
    \[
    P=\{1,3,\ldots,s-1\},\qquad
    Q=\{2,4,\ldots,s\},\qquad
    \Omega'=\{1,2,\ldots,N\}.
    \]
    Thus \(U\) acts as a cycle on each of \(P\) and \(Q\). We first show that any block containing either \(P\) or \(Q\) must equal \(\Omega'\).

    Suppose \(B\) be a block with \(P\subseteq B\), and put
    \(B'=T_a(B)\).
    Since  
    \[
    T_a(1)=1+a\in Q,
    \qquad
    T_a(s+2-a)=s+2\in F,
    \]
    the block \(B'\) meets both \(Q\) and \(F\). As \(U\) fixes every point of \(F\), we have \(U(B')\cap B'\ne\varnothing\), and hence \(U(B')=B'\). Since \(U\) is transitive on \(Q\), it follows that \(Q\subseteq B'\).

    Moreover, \(a<b\), and both \(a\) and \(b\) are odd, so \(b\ge a+2\). Consequently,
    \[
    T_a^2(1)=1+2a\in P\subseteq B.
    \]
    Thus \(T_a^2(B)\cap B\ne\varnothing\), which implies
    \[
    T_a(B')=T_a^2(B)=B.
    \]
    Similarly,
    \[
    T_b(1)=b+1\in Q\subseteq B',
    \qquad
    T_b(2)=b+2\in P\subseteq B,
    \]
    and therefore
    \[
    T_b(B)=B',
    \qquad
    T_b(B')=B.
    \]
    It follows that \(B\cup B'\) is invariant under both \(T_a\) and \(T_b\), and hence under \(K\). By the transitivity of \(K\),
    \[
    B\cup B'=\Omega'.
    \]
    If \(B\ne B'\), then \(B\) and \(B'\) are disjoint blocks of equal size, forcing \(N\) to be even. This contradicts the assumption that \(N\) is odd. Hence
    \[
    B=B'=\Omega'.
    \]
    The case \(Q\subseteq B\) follows by a similar argument.

    Now let \(B\) be a block meeting both \(S\) and \(F\). Since \(U\) fixes every point of \(F\), the block \(B\) is \(U\)-invariant. As \(B\) meets \(S=P\cup Q\), it must contain either \(P\) or \(Q\). The preceding argument then gives \(B=\Omega'\).

    Set
    \[
    h=\min\{a,t\},
    \qquad
    I=S\cap T_a(S)=\{1,2,\ldots,h\}.
    \]
    Suppose first that \(h\ge3\). Then \(h\) is odd and \(h+2\le s\), so
    \[
    U(I)=\{3,4,\ldots,h+2\},
    \qquad
    I\setminus U(I)=\{1,2\}.
    \]
    Thus \(\{1,2\}\) is a union of blocks. If \(c\) denotes the common block size, then \(c\mid2\) and \(c\mid N\). Since \(N\) is odd, we obtain \(c=1\). Therefore \(G_N\) is primitive.

    We now complete the argument for \(a\ge3\). Write
    \[
    t=N-s\ge3,
    \qquad h\le a<b.
    \]
    We construct an element \(V\in G_N\) supported on the interval
    \[
    \{h+1,h+2,\ldots,h+s\}
    \]
    whose two cycles consist of alternate points of this interval. If \(t\ge a\), set
    \[
    V=T_aUT_a^{-1}.
    \]
    If \(3\le t<a\), set
    \[
    V=r_NU^{-1}r_N.
    \]
    In either case,
    \[
    V=(h+1\ h+3\ \cdots\ h+s-1)
    (h+2\ h+4\ \cdots\ h+s).
    \]
    Let \(W=VU^{-1}\). A direct calculation yields
    \[
    W=
    (1\ s+1\ s+3\ \cdots\ s+h\ 2\ h-2\ h-4\ \cdots\ 3)
    (2\ s+2\ s+4\ \cdots\ h+s-1\ h+1\ h-1\ \cdots\ 4).
    \]
    The two disjoint cycles in this decomposition have lengths \(h+2\) and \(h\), respectively. Since \(h\) is odd,
    \[
    \gcd(h+2,h)=1.
    \]
    Hence \(W^h\) is a single \((h+2)\)-cycle. The number of points fixed by \(W^h\) is
    \[
    N-(h+2)=s+a-2\ge s-2\ge6.
    \]
    Since \(G_N\) is primitive, lemma ~\ref{jones1.3} now implies
    \[
    A_N\le G_N.
    \]

    It remains to consider \(a=1\) and \(b\) odd. In this case,
    \[
    T_1=(1\ 2\ \cdots\ N),
    \qquad
    U=(1\ 3\ \cdots\ s-1)(2\ 4\ \cdots\ s),
    \]
    and a direct calculation gives
    \[
    T_1UT_1^{-1}U^{-1}=(1\ s+1\ 3).
    \]
    Since \(N\) is odd, we have \(\gcd(s,2,N)=1\). Lemma~\ref{Iradmusa} therefore yields
    \[
    A_N\le G_N.
    \]
    Combining the preceding cases, we obtain \(A_{N}\le G_{n}\) for all admissible values of \(a\) and \(b\). 
    \end{enumerate}
\end{proof}

\begin{theorem}
    If \(G_{n}=S_{n}\), then \(G_{n+4}=S_{n+4}\)
\end{theorem}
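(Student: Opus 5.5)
The plan is to apply Lemma~\ref{An} with \(N=n+4\) and then add one odd generator to get all of \(S_{n+4}\). Shifting all three lengths by four leaves \(a=n-m\) and \(b=n-k\) unchanged, so
\[
G_{n+4}=\langle r_{n+4},r_{m+4},r_{k+4}\rangle
\]
has the same parameters \(a,b\). In the notation of the preliminaries, with \(c=b\), we have \(s=a+b\) and \(t=N-s=n+4-a-b\). Lemma~\ref{An} therefore gives \(A_{n+4}\le G_{n+4}\) once its two hypotheses are checked. This requires that the proof of Lemma~\ref{An} is really about \(G_N\) built from the same \(a,b\), which is how it is written.

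\textbf{Step 1: \(t\ge 3\).} Since \(G_n=S_n\), Theorem~\ref{m+k<n} forces \(m+k\ge n-1\). Writing \(m+k=2n-a-b\), this gives \(a+b\le n+1\), and hence
\[
t=n+4-(a+b)\ge 3 .
\]

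\textbf{Step 2: the parity hypothesis.} Suppose \(a\) and \(b\) are both odd. Then \(n,m,k\) all have the same parity. If \(n\) were even, all three indices would be even, contradicting Lemma~\ref{Douglas}~\ref{Douglas.5} (equivalently the \(g=2\) case of the gcd lemma above). So \(n\) is odd, and therefore \(N=n+4\) is odd, as Lemma~\ref{An} requires. Both hypotheses hold, so Lemma~\ref{An} yields \(A_{n+4}\le G_{n+4}\).

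\textbf{Step 3: an odd generator.} It remains to show \(G_{n+4}\) contains an odd permutation. The reversal \(r_q\) is a product of \(\lfloor q/2\rfloor\) disjoint transpositions, so its sign is \((-1)^{\lfloor q/2\rfloor}\). Since \(\lfloor (q+4)/2\rfloor=\lfloor q/2\rfloor+2\), the reversals \(r_{q+4}\) and \(r_q\) have the same parity. By Lemma~\ref{Douglas}~\ref{Douglas.4}, the hypothesis \(G_n=S_n\) forces some \(r_q\) with \(q\in\{n,m,k\}\) to be odd. Then \(r_{q+4}\in G_{n+4}\) is odd, so \(G_{n+4}\) properly contains \(A_{n+4}\), and hence \(G_{n+4}=S_{n+4}\).

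The only delicate point is Step 1: the bound \(t\ge3\) comes from the non-generation Theorem~\ref{m+k<n} in its weakest form (\(m+k\ge n-1\)). The shift by \(4\) is exactly what makes this weak bound sufficient, whereas a shift by \(2\) would leave only \(t\ge1\).
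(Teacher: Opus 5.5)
Your Steps 1 and 3 are the paper's argument: $t\ge 3$ from Theorem~\ref{m+k<n}, then Lemma~\ref{An}, then the odd generator $r_{q+4}$. You are also right that Lemma~\ref{An} needs $N$ odd when $a,b$ are both odd; the paper's proof never checks this hypothesis. However, your Step 2 is false as written. If $a=n-m$ and $b=n-k$ are odd, then $m$ and $k$ have parity \emph{opposite} to that of $n$. So $n$ even gives $m,k$ odd, not all three even. In that situation neither Lemma~\ref{Douglas}\ref{Douglas.5} nor the $g=2$ case of the gcd lemma (which is about $a,b$ both even) yields a contradiction. For example, $n=6$, $m=5$, $k=3$ has $\gcd(6,5,3)=1$, and $r_6$ already supplies the even length required by condition~\ref{Douglas.3}.

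The conclusion that $n$ is odd is still true, but it needs a block argument rather than a gcd argument. Suppose $n$ is even and $m,k$ are odd. For odd $q$, $r_q$ sends each $j\le q$ to $q+1-j$, which has the same parity as $j$, and it fixes every $j>q$. Hence $r_m$ and $r_k$ map the set $O$ of odd positions and the set $E$ of even positions onto themselves. Since $n+1$ is odd, $r_n$ sends every $j$ to a position of the opposite parity, so $r_n(O)=E$ and $r_n(E)=O$. Thus $\{O,E\}$ is a $G_n$-invariant partition into two blocks of size $n/2>1$, so $G_n$ is imprimitive and $G_n\ne S_n$, a contradiction. With this in place of your Step 2, $N=n+4$ is odd whenever $a$ and $b$ are both odd, and the rest of your proof goes through as in the paper. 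Note that the step needing care was this one, not Step 1.
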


\begin{proof}
    By theorem~\ref{m+k<n}, we have \(m+k-1\ge n\), hence \(n+4\ge n-m+n-k+3\), it mains \(t\ge 3\). By lemma~\ref{An}, \(G_{N}\ge A_{N}\). Since \(G_{n}=S_{n}\), at least one of \(r_{n},r_{m},r_{k}\) is an odd permutation, which mean there is an index \(i\in \left\{n,m,k\right\}\) satisfying 
    \(i \equiv 2\ \text{or}\ 3 \pmod4\)    Then \(r_{i+4} \equiv r_{i} \pmod{4}\), there is an odd permutation in \(G_{N}\).By lemma~\ref{An}, \(G_{N}=S_{N}\).
\end{proof}

\begin{theorem}
    For any \(n \equiv 0\ \text{or}\ 1 \pmod 4 \), if \(G_{n}=S_{n}\), then \(G_{n+2}=S_{n+2}\)
\end{theorem}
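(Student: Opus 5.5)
We follow the same scheme as the proof of the \(n+4\) shift, now with \(N=n+2\). Here \(G_{N}\) means \(\langle r_{N},r_{N-a},r_{N-b}\rangle\). The differences \(a=n-m\) and \(b=n-k\) are unchanged, so the operators \(T_a,T_b\), the commutator \(D\) and the cycle element \(U\) of (\ref{U}) are built exactly as before, now acting on \(\{1,\ldots,N\}\). The plan has three steps: show \(t\ge 3\), invoke Lemma~\ref{An} to get \(A_N\le G_N\), and then produce an odd permutation in \(G_N\).

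\textbf{Step 1: \(t\ge3\).} Since \(G_n=S_n\), Theorem~\ref{m+k<n} gives \(m+k\ge n-1\), i.e.\ \(a+b\le n+1\). If \(n\equiv0\pmod4\) then \(n\) is even, and the second part of Theorem~\ref{m+k<n} gives the stronger bound \(m+k\ge n\), i.e.\ \(a+b\le n\). Then \(t=N-s=n+2-(a+b)\ge2\), and we need one more unit.

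I would try to gain it through parity. Equality \(a+b=n\) with \(n\) even forces \(a,b\) to have the same parity, and \(\gcd(a,b)=1\) makes them both odd. In that subcase one has to check separately that \(m+k=n\) is impossible for a generating triple. The plan is to refine the tree argument of Theorem~\ref{m+k<n}: \(|E(X)|=n/2+\lfloor m/2\rfloor+\lfloor k/2\rfloor=n-1\) when \(m,k\) are both even, but \(m,k\) odd gives only \(n-2\) edges, so \(X\) is disconnected. For \(m,k\) even, all three indices are even, which contradicts Lemma~\ref{Douglas}~\ref{Douglas.5}.

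For \(n\equiv1\pmod4\), \(n\) is odd, and \(a+b=n+1\) or \(a+b=n\) must be excluded. \(a+b=n+1\) means \(m+k=n-1\), which is impossible by the first part of Theorem~\ref{m+k<n}. For \(a+b=n\), parity of \(n\) makes \(a,b\) of opposite parity. This is the case I expect to be the main obstacle. If \(t=2\) actually occurs, the fallback is to note that \(U\) is an \(s\)-cycle fixing \(2\) points, so Lemma~\ref{jones1.2}~\ref{jones3} restricts \(G_N\) to \(P\Gamma L_2(q)\) with \(N=q+1\). The primitivity proof of Lemma~\ref{An} does not use \(t\ge3\) in the opposite-parity case, and one would rule out these exceptional groups, e.g.\ via the presence of the involution \(r_N\) with prescribed fixed points and of further cycles such as \(T_aUT_a^{-1}\).

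\textbf{Step 2: the parity hypothesis of Lemma~\ref{An}.} When \(a,b\) are both odd, Lemma~\ref{An} requires \(N\) odd. But \(N=n+2\equiv n\pmod 2\), and in that case \(n,m,k\) all have the same parity. If \(n\) is even they are all even, contradicting Lemma~\ref{Douglas}~\ref{Douglas.5}. Hence \(n\) is odd, so \(N\) is odd, and Lemma~\ref{An} applies to give \(A_N\le G_N\).

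\textbf{Step 3: an odd permutation.} The sign of \(r_i\) is \((-1)^{\lfloor i/2\rfloor}\), so \(r_i\) is odd iff \(i\equiv2,3\pmod4\). If \(n\equiv0,1\pmod 4\), then \(N=n+2\equiv2,3\pmod4\), so \(r_N\) itself is odd. Therefore \(G_N\) contains \(A_N\) together with an odd element, whence \(G_{N}=S_{N}\). This explains the congruence hypothesis: it is used only here, and it makes this step immediate, in contrast to the \(n+4\) case. The real work lies in Step 1.
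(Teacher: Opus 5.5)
\textbf{Main gap: \(t\ge 3\) fails, and the cases \(t=1,2\) are not handled.} Your reduction to Lemma~\ref{An} needs \(t\ge 3\), but this is false in general. For odd \(n\), Theorem~\ref{m+k<n} only excludes \(m+k\le n-2\). You wrote \(m+k\ge n-1\) yourself a few lines earlier, so \(a+b=n+1\) (i.e.\ \(t=1\)) is not ruled out by that theorem, and neither is \(a+b=n\) (i.e.\ \(t=2\)).

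Both cases actually occur. The triples \((n,m,k)=(13,10,2)\) and \((13,11,2)\) generate \(S_{13}\): in each case the graph \(X\) is a spanning tree, so the group is transitive of prime degree, hence primitive, and it contains the transposition \(r_2\). They give \(N=15\) with \(t=1\) and \(t=2\) respectively. So Lemma~\ref{An} cannot carry the proof. These boundary cases are exactly where the paper does its real work.

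For \(t=2\): after excluding \(n\equiv0\pmod4\) (see below), this means \(n\equiv1\pmod4\), \(s=n\) odd, and \(a,b\) of opposite parity. The primitivity argument goes through as you say, and Lemma~\ref{jones1.2}~\ref{jones3} gives \(N=q+1\). The finish your fallback lacks is pure arithmetic: \(q=N-1\equiv2\pmod4\) forces \(q=2\), contradicting \(N\ge7\). So the congruence hypothesis is used here too, not only for the sign of \(r_N\).

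For \(t=1\): here \(n\equiv1\pmod4\), \(a,b\) are odd, and \(k=a-1\), so \(a\ge3\). Your plan says nothing about this case. The paper proves primitivity directly, using two facts: \(U=(1\ 3\ \cdots\ s-1)(2\ 4\ \cdots\ s)\) fixes only the point \(N\), and any block containing the support of either cycle of \(U\) is everything. It then shows that the product of \(U^{-1}\) with its conjugate \(r_NU^{-1}r_N\) is a \(3\)-cycle through \(1\) and \(N\), and concludes with Lemma~\ref{jones1.1}.

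\textbf{Two smaller errors.} In Step~2, \(a,b\) odd gives \(m\equiv k\not\equiv n\pmod 2\). So \(n,m,k\) do not share a parity, and Lemma~\ref{Douglas}~\ref{Douglas.5} yields nothing. Checking the parity hypothesis of Lemma~\ref{An} is necessary, but the correct reason is different. If \(n\) is even and \(m,k\) are odd, then \(r_m\) and \(r_k\) preserve the parity of positions (they act as \(x\mapsto m+1-x\) with \(m+1\) even). Meanwhile \(r_n\) interchanges odd and even positions. Hence the odd and even positions form a block system, and \(G_n\ne S_n\).

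The same observation settles the case \(n\equiv0\pmod4\), \(a+b=n\) in Step~1, where your edge count is off. For \(m,k\) odd with \(m+k=n\) one has \(|E(X)|=n-1\), not \(n-2\), and \(X\) can be connected; for example \((8,5,3)\) gives a spanning tree. So the conclusion there comes from the tree/parity argument in the proof of Theorem~\ref{m+k<n}, or from the odd/even blocks, not from disconnectedness.
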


\begin{proof}
    By theorem~\ref{m+k<n}, we have \(m+k-1\ge n\), hence \(n+2\ge n-m+n-k+1\), it mains \(t\ge 1\). By lemma~\ref{An}, when \(t\ge 3\), \(G_{N}\ge A_{N}\). Hence we consider \(t=1\) and \(t=2\) at the following.
    \begin{enumerate}[label=(\arabic*)]
        \item \(t=2\)
        
    In this case, the hypotheses of theorem give
    \[
    N\equiv3\pmod4,\qquad s=N-2,\qquad N\ge7.
    \]
    In particular, \(s\) is odd, and \(a\) and \(b\) have opposite parity. By ~\ref{U}, the group \(G_{N}\) contains the cycle
    \[
    U=(1\ 2\ \cdots\ s),
    \]
    whose support is \(S=\{1,\ldots,s\}\) and whose fixed-point set is
    \(F\).

    We first prove that \(G_{N}\) is primitive. Let \(\mathcal B\) be an \(G_{N}\)-invariant partition of \(\Omega\) with at least two parts. Suppose that some block \(B\in\mathcal B\) meets both \(S\) and \(F\). Since \(U\) fixes every point of \(B\cap F\), we have
    \[
    U(B)\cap B\ne\varnothing.
    \]
    The block property therefore implies \(U(B)=B\). As \(B\cap S\ne\varnothing\) and \(U\) acts transitively on \(S\), it follows that \(S\subseteq B\).

    For each \(d\in\{a,b\}\),
    \[
    T_d(S)\cap S\ne\varnothing.
    \]
    Since \(S\subseteq B\), this implies \(T_d(B)\cap B\ne\varnothing\), and hence \(T_d(B)=B\). Thus \(B\) is invariant under \(K=\langle T_a,T_c\rangle\). The transitivity of \(K\) now gives \(B=\Omega\), contradicting the choice of \(\mathcal B\).

    Consequently, every block of \(\mathcal B\) is contained entirely in either \(S\) or \(F\). In particular, \(S\) is a union of blocks. Let
    \[
    h=\min\{a,2\}.
    \]
    Using the explicit action of \(T_a\), we obtain
    \[
    I:=S\setminus T_a(S)=\{1,\ldots,h\}.
    \]
    Because \(\mathcal B\) is \(G_{N}\)-invariant and \(T_a\in G_{N}\), both \(S\) and \(T_a(S)\) are unions of blocks of \(\mathcal B\). Their difference \(I\) is therefore also a union of blocks. Similarly, \(U(I)\) is a union of blocks. Since \(h\in\{1,2\}\) and \(U=(1\ 2\ \cdots\ s)\), we have
    \[
    I\setminus U(I)=\{1\}.
    \]
    Thus \(\{1\}\) is a union of blocks and must itself be a block. Since \(G_{N}\) is transitive, all blocks in \(\mathcal B\) have the same size. Hence every block is a singleton. This proves that \(G_{N}\) is primitive.

    Suppose, for a contradiction, that \(A_N\nleq G_{N}\). Since \(G_{N}\) is primitive and contains the \((N-2)\)-cycle \(U\), the theorem~\ref{jones1.2}~\ref{jones3} implies that
    \[
    N=q+1,
    \]
    for some prime power \(q\). However,
    \[
    q=N-1\equiv2\pmod4.
    \]
    Thus \(q\) is an even prime power, so \(q=2^e\) for some integer \(e\ge1\). The congruence \(2^e\equiv2\pmod4\) forces \(e=1\), giving \(N=3\). This contradicts \(N\ge7\). Therefore
    \[
    A_N\le G_{N}.
    \]

        \item \(t=1\)
     In this case,
    \[
    s=a+b=N-1=n+1.
    \]
    Under the hypotheses under consideration, \(n\) is odd, so \(s\) is even. Since \(\gcd(a,b)=1\), it follows that \(a\) and \(b\) are both odd. Moreover,
    \[
    k=n-b=n-(n+1-a)=a-1.
    \]
    The assumption \(k\ge2\) therefore gives \(a\ge3\).

    By \ref{U}
    \[
    U=(1\ 3\ \cdots\ s-1)(2\ 4\ \cdots\ s)
    \]
    acts transitively on each of \(P\) and \(Q\) and fixes \(N\).

    We first claim that any block for \(G_{N}\) containing either \(P\) or \(Q\) must equal \(\Omega\). Suppose first that \(Q\subseteq B\), and put \(B'=T_a(B)\). Since \(N-a=b+1\), the explicit formula for \(T_a\) gives
    \[
    T_a(c+1)=N,\qquad
    T_a(2)=a+2\in P,\qquad
    T_a(s)=2\in Q.
    \]
    All three points \(b+1,2,s\) belong to \(Q\). Thus \(B'\) contains \(N\) and meets both \(P\) and \(Q\). Since \(U(N)=N\), we have
    \[
    U(B')\cap B'\ne\varnothing,
    \]
    and the block property yields \(U(B')=B'\). The transitivity of \(U\) on each of \(P\) and \(Q\) now implies
    \[
    P\cup Q\cup\{N\}\subseteq B'.
    \]
    Hence \(B'=\Omega\), and therefore \(B=\Omega\).

    Next, suppose that \(P\subseteq B\). Since \(a\ge3\),
    \[
    s-1=a+b-1>b+1=N-a,
    \]
    so
    \[
    T_a(s-1)=N+1-(s-1)=3\in P.
    \]
    Consequently, \(T_a(B)\cap B\ne\varnothing\), and hence \(T_a(B)=B\). In particular,
    \[
    T_a(1)=a+1\in Q\cap B.
    \]
    Furthermore, \(U(P)=P\subseteq B\), so \(U(B)\cap B\ne\varnothing\), giving \(U(B)=B\). Since \(B\) meets \(Q\) and \(U\) acts transitively on \(Q\), we obtain \(Q\subseteq B\). The preceding argument then gives \(B=\Omega\), proving the claim.

    We now establish primitivity. Let \(\mathcal B\) be an \(G_{N}\)-invariant partition of \(\Omega\) with at least two parts, and let \(B\in\mathcal B\) contain \(N\). Since \(U\) fixes \(N\), the block \(B\) is \(U\)-invariant. If \(B\) contained a point of \(S\), it would contain that point's entire orbit under \(U\), and hence either \(P\) or \(Q\). By the claim, this would force \(B=\Omega\), a contradiction. Therefore \(B=\{N\}\). As \(G_{N}\) is transitive, all blocks of \(\mathcal B\) have the same size, so every block is a singleton. Thus \(G_{N}\) is primitive.

    It remains to construct a \(3\)-cycle in \(G_{N}\). Set
    \[
    V=r_NU^{-1}r_N.
    \]
    Conjugation by \(r_N\) gives
    \[
    V=(2\ 4\ \cdots\ s)(3\ 5\ \cdots\ N).
    \]
    Let
    \[
    A=(1\ 3\ \cdots\ s-1),\qquad
    B_0=(2\ 4\ \cdots\ s),\qquad
    C=(3\ 5\ \cdots\ N).
    \]
    Then \(U=AB_0\) and \(V=B_0C\). Since \(B_0\) has support disjoint from those of both \(A\) and \(C\), it commutes with both, and therefore
    \[
    VU^{-1}
    =B_0CB_0^{-1}A^{-1}
    =CA^{-1}.
    \]
    With products acting from left to right, we have
    \[
    1\xrightarrow{A^{-1}}s-1\xrightarrow{C}N,\qquad
    N\xrightarrow{A^{-1}}N\xrightarrow{C}3,\qquad
    3\xrightarrow{A^{-1}}1\xrightarrow{C}1.
    \]
    Every even point is fixed by both \(A^{-1}\) and \(C\). For each remaining odd point \(x\in\{5,7,\ldots,s-1\}\),
    \[
    x\xrightarrow{A^{-1}}x-2\xrightarrow{C}x.
    \]
    Consequently,
    \[
    U^{-1}V=(1\ N\ 3)\in H.
    \]

    Finally, since \(a\ge3\) and \(b>a\) are both odd, we have \(b\ge5\), and hence
    \[
    N=a+c+1\ge9.
    \]
    Thus \(G_{N}\) is primitive and contains a \(3\)-cycle with at least six fixed points. Theorem~\ref{jones1.1} therefore implies
    \[
    A_N\le G_{N}.
    \]

    \end{enumerate}
    Combining the preceding cases, we obtain \(A_{N}\le G_{N}\) for \(t\ge 1\).Since \(n \equiv 0\ \text{or}\ 1 \pmod 4 \), \(N \equiv 2\ \text{or}\ 3 \pmod 4 \). Thus \(r_{N}\) is an odd permutation in \(G_{N}\), and then \(G_{N}=S_{N}\).      
\end{proof}

\section*{Acknowledgments}
    Supported by the National Natural Science Foundation of China (No.12661002) ,Inner Mongolia Natural Science Foundation (No. 2025MS01022) and Hohhot Science and Technology Program(No. 2025-Gui-Ji-59).

\section*{Data availability}
    No data was used for the research described in the article.

\end{document}